\theoremstyle{plain}
\newtheorem{theorem}{Theorem}[section]
\newtheorem{proposition}{Proposition}[section]
\newtheorem{lemma}{Lemma}[section]
\newtheorem{corollary}{Corollary}[section]
\theoremstyle{definition}
\newtheorem{definition}{Definition}[section]
\newtheorem{remark}{Remark}[section]
\begin{document}
	
	\title{Additive transform of an arithmetic function : Part I}
	
	\author{
		{ \sc Es-said En-naoui } \\ 
		University Sultan Moulay Slimane\\ Morocco\\
		essaidennaoui1@gmail.com\\
		\\
	}
	
	\maketitle

	\begin{abstract}
		The Additive Transform of an arithmetic function represents a novel approach to examining the interplay between multiplicative arithmetic function and additive functions. This transform concept introduces a method to systematically generate new arithmetic functions by combining the values of an existing function under an additive operation. The resulting framework not only extends our understanding of classical arithmetic functions but also provides a versatile tool for exploring additive relationships within the realm of number theory. In this article, we present the fundamental principles of the Additive Transform and illustrate its application through various examples, shedding light on its potential implications for diverse mathematical domains.
		\\
		For all positive integer $n$. a motivation for the present study is to give a new concept named the Additive transform of an arithmetic function $f$ when $f$ equals some special arithmetic functions, that new concept can help us to prove many results  like  : \begin{equation}
			\big(\mu*f.Id\big)(n)
			=\varphi(n)f(n)+\varphi(n)
			\sum \limits_{p^{\alpha}||n} \frac{f(p^{\alpha})-f(p^{\alpha-1})}{p-1}
		\end{equation}
		where $f$ is an additive function .
	\end{abstract}
	
	\section{Introduction}

	First of all, to cultivate analytic number theory one must acquire a considerable 
	skill for operating with arithmetic functions. We begin with a few elementary  considerations. 
	
	\begin{definition}[arithmetic function]
		An \textbf{arithmetic function} is a function $f:\mathbb{N}\longrightarrow \mathbb{C}$ with
		domain of definition the set of natural numbers $\mathbb{N}$ and range a subset of the set of complex numbers $\mathbb{C}$.
	\end{definition}
	
	\begin{definition}[multiplicative function]
		A function $f$ is called an \textbf{multiplicative function} if and
		only if : 
		\begin{equation}\label{eq:1}
			f(nm)=f(n)f(m)
		\end{equation}
		for every pair of coprime integers $n$,$m$. In case (\ref{eq:1}) is satisfied for every pair of integers $n$ and $m$ , which are not necessarily coprime, then the function $f$ is
		called \textbf{completely multiplicative}.
	\end{definition}
	
	Clearly , if $f$ are a multicative function , then
	$f(n)=f(p_1^{\alpha _1})\ldots f(p_s^{\alpha _s})$, 
	for any positive integer $n$ such that 
	$n = p_1^{\alpha _1}\ldots  p_s^{\alpha _s}$ , and if $f$ is completely multiplicative , so we have : $f(n)=f(p_1)^{\alpha _1}\ldots f(p_s)^{\alpha _s}$.

	The functions defined above are widely studied in the literature, see, e.g., \cite{Ap,KM,LT,Mc,SC,Sc, Sh}. 
	
	\bigskip
	\begin{definition}[additive function]
		A function $f$ is called an \textbf{additive function} if and
		only if : 
		\begin{equation}
			f(nm)=f(n)+f(m)
		\end{equation}
		for every pair of coprime integers $n$,$m$. In case (3) is satisfied for every pair of integers $n$ and $m$ , which are not necessarily coprime, then the function $f$ is
		called \textbf{completely additive}.
	\end{definition}
	Clearly , if $f$ are a additive function , then
	$f(n)=f(p_1^{\alpha _1})+\ldots +f(p_s^{\alpha _s})$, 
	for any positive integer $n$ such that 
	$n = p_1^{\alpha _1}\ldots  p_s^{\alpha _s}$ , and if $f$ is completely additive , so we have : $f(n)=\alpha _1f(p_1)+\ldots +\alpha _sf(p_s)$.
	\bigskip
	\begin{definition}[L-additive function]
		We say that an arithmetic function $f$ is {\em Leibniz-additive} (or, {\em L-additive}, in short) if there is a completely multiplicative function $h_f$ such that 
		\begin{equation}\label{gca}
			f(mn)=f(m)h_f(n)+f(n)h_f(m)
		\end{equation}
		for all positive integers $m$ and $n$. 
	\end{definition}
	\medskip
	\section{Additive transform of an arithmetic function .}
	Let us begin by defining the new concept related to thearithmetic function, this concept called \textbf{Additive transform} of arithmetic function.
	\begin{definition}\label{def-2-1}
		Let $f$ an function arithmetic.\\
		The Additive transform of an arithmetic function $f$ is an application  
		$\Phi_f :\mathbb{N} \longrightarrow \mathbb{C}$  verify that condition : 
		\begin{enumerate}
			\item 
			$\forall\;p\;\in\mathbb{P}\;,\;\forall\;\alpha\;\in\mathbb{N}^*\;\;,\;\;
			\Phi_f(p^{\alpha})=f(p^{\alpha}).$
			\item
			$\Phi_f(nm)=n\Phi_f(m)+m\Phi_f(n)$ for relatively prime $m$ and $n$. (Leibniz formula).
		\end{enumerate}
	\end{definition}
	
	\begin{definition}
		Let $f$ an arithmetic function.\\
		The Additive transform of an arithmetic function $f$ is a function $\Phi_f:\mathbb{N}\longrightarrow \mathbb{C}.$,It is :
		\begin{enumerate}
			\item
			\textbf{Additive transform} if $\Phi_f(nm)=n\Phi_f(m)+m\Phi_f(n)$ for relatively prime $m$ and $n$.
			\item
			\textbf{Completely additive transform} if $\Phi_f(nm)=n\Phi_f(m)+m\Phi_f(n)$ for any $m$ and $n$.
		\end{enumerate}
	\end{definition}
	\begin{remark}
		Let $f$ an arithmetic function , if the additive transform of $f$ is completely additive transform , then $\Phi_f$ is L-additive with $h_{\Phi_f}(n)=n$ .
	\end{remark}
	\begin{theorem}\label{the-2-1}
		Let $n$ a positive integer such that $n = p_1^{\alpha _1}\ldots  p_s^{\alpha _s}$ and let $f$ an arithmetic function , So  we have : 
		\begin{equation}
			\Phi_{f}(n)=n\sum \limits_{i=1}^s\frac{f\big(p_i^{\alpha _i}\big)}{p_i^{\alpha _i}}.
		\end{equation}
	\end{theorem}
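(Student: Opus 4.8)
The plan is to establish the closed form by induction on $s$, the number of distinct prime divisors of $n$, using the two defining conditions of the additive transform in Definition~\ref{def-2-1}. Condition~1 supplies the base case directly, while condition~2 (the Leibniz formula) provides the recursion that peels off one prime power at a time. The whole argument is essentially forced: once one commits to inducting on $s$, each defining property slots into exactly one role.

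For the base case $s=1$ we have $n=p_1^{\alpha_1}$, and condition~1 gives $\Phi_f(n)=f(p_1^{\alpha_1})$, which equals $p_1^{\alpha_1}\cdot f(p_1^{\alpha_1})/p_1^{\alpha_1}$, matching the claimed right-hand side. For the inductive step, I would assume the formula holds for every integer with $s-1$ distinct prime factors. Writing $n=m\,p_s^{\alpha_s}$ with $m=p_1^{\alpha_1}\cdots p_{s-1}^{\alpha_{s-1}}$, the factors $m$ and $p_s^{\alpha_s}$ are coprime, so condition~2 applies and yields
\begin{equation}
    \Phi_f(n)=m\,\Phi_f(p_s^{\alpha_s})+p_s^{\alpha_s}\,\Phi_f(m).
\end{equation}

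Next I would substitute $\Phi_f(p_s^{\alpha_s})=f(p_s^{\alpha_s})$ from condition~1 and $\Phi_f(m)=m\sum_{i=1}^{s-1} f(p_i^{\alpha_i})/p_i^{\alpha_i}$ from the induction hypothesis. The remaining work is purely algebraic: rewriting $m\,f(p_s^{\alpha_s})=n\,f(p_s^{\alpha_s})/p_s^{\alpha_s}$ and noting $p_s^{\alpha_s}\,m=n$, both terms acquire the common factor $n$ and recombine into $n\sum_{i=1}^{s}f(p_i^{\alpha_i})/p_i^{\alpha_i}$, completing the induction.

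I do not expect any serious obstacle here; the argument is a clean induction whose only delicate points are bookkeeping. The step requiring the most care is verifying that the prefactors recombine correctly after substitution, i.e.\ that each term genuinely carries the global factor $n$ once the induction hypothesis is inserted, together with checking that coprimality is legitimately invoked each time condition~2 is applied. It is also worth remarking that this theorem simultaneously shows that conditions~1 and~2 determine $\Phi_f$ uniquely, so the formula may equally be read as a well-definedness statement for the additive transform.
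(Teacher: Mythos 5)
Your proof is correct and follows essentially the same route as the paper: both peel off one prime power at a time using the Leibniz condition for coprime factors and the prescribed values on prime powers, the only difference being that you formalize the repeated expansion as an induction on $s$ while the paper writes it out as an iterated computation with ellipses. No issues.
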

	
	\begin{proof}
		Let $n$ a positive integer such that $n = p_1^{\alpha _1}\ldots  p_s^{\alpha _s}$ and let $f$ an arithmetic function ,bu definition $(\ref{def-2-1})$ Then :
		\begin{align*}
			\Phi_{f}(n) &=\Phi_{f}(p_1^{\alpha _1}\ldots  p_s^{\alpha _s})
			=p_1^{\alpha_1}\Phi_{f}(p_2^{\alpha _2}\ldots  p_s^{\alpha _s})+p_2^{\alpha _2}\ldots  p_s^{\alpha _s}\Phi_{f}(p_1^{\alpha_1})\\
			& =n\frac{f(p_1^{\alpha_1})}{p_1^{\alpha_1}}
			+p_1^{\alpha_1}\bigg(p_2^{\alpha_2}\Phi_{f}(p_3^{\alpha _3}\ldots  p_s^{\alpha _s})+p_3^{\alpha _3}\ldots  p_s^{\alpha _s}\Phi_{f}(p_2^{\alpha _2})
			\bigg)\\
			&=n\frac{f(p_1^{\alpha_1})}{p_1^{\alpha_1}}+
			n\frac{f(p_2^{\alpha_2})}{p_2^{\alpha_2}}+p_1^{\alpha_1}p_2^{\alpha_2}\Phi_{f}(p_3^{\alpha _3}\ldots  p_s^{\alpha _s})\\
			\vdots\\
			&=n\frac{f(p_1^{\alpha_1})}{p_1^{\alpha_1}}+
			n\frac{f(p_2^{\alpha_2})}{p_2^{\alpha_2}}+\ldots+
			n\frac{f(p_s^{\alpha_s})}{p_s^{\alpha_s}}\\
			&=n\bigg(\frac{f(p_1^{\alpha_1})}{p_1^{\alpha_1}}+\ldots+ \frac{f(p_s^{\alpha_s})}{p_s^{\alpha_s}}\bigg)\\
			&=n\sum \limits_{i=1}^s\frac{f\big(p_i^{\alpha _i}\big)}{p_i^{\alpha _i}}
		\end{align*}
	\end{proof}
	
	\begin{definition}
		Let $n$ a positive integer , and $p\in\;\mathbb{P}$ such that $\alpha=v_p(n)\neq 0$ , So one can define the arithmetic partial derivative of a function arithmetic $f$ via :
		\begin{equation}
			\frac{\partial f}{\partial p}(n)=\frac{f(p^{\alpha})}{p^{\alpha}}n
		\end{equation}
	\end{definition}
	
	Then we have this result :
	\begin{equation}
		\Phi_f(n)=\sum \limits_{p^{\alpha}||n}\frac{\partial f}{\partial p}(n)
	\end{equation}
	
	\begin{proposition}
		Let $n$ a positive integer such that  and let $f$ an arithmetic function multiplicative , So  we have : 
		\begin{equation}
			\Phi_{f}(n)\geq n\omega(n)\bigg(\frac{f(n)}{n}\bigg)^{\frac{1}{\omega(n)}}.
		\end{equation}
	\end{proposition}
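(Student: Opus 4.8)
The plan is to reduce the claim directly to the arithmetic mean--geometric mean inequality. By Theorem \ref{the-2-1}, since $\omega(n)=s$, we have
$$
\Phi_f(n)=n\sum_{i=1}^{s}\frac{f\big(p_i^{\alpha_i}\big)}{p_i^{\alpha_i}}.
$$
The strategy is to set $a_i:=\dfrac{f(p_i^{\alpha_i})}{p_i^{\alpha_i}}$ for $1\le i\le s$, so that $\Phi_f(n)=n\sum_{i=1}^{s}a_i$, and then to recognize the right-hand side of the proposition as $n\omega(n)$ times the geometric mean of the $a_i$.

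First I would use the multiplicativity of $f$ to identify the geometric mean. Since $f$ is multiplicative and $n=\prod_{i=1}^{s}p_i^{\alpha_i}$, we have $f(n)=\prod_{i=1}^{s}f(p_i^{\alpha_i})$, hence
$$
\frac{f(n)}{n}=\frac{\prod_{i=1}^{s}f(p_i^{\alpha_i})}{\prod_{i=1}^{s}p_i^{\alpha_i}}=\prod_{i=1}^{s}\frac{f(p_i^{\alpha_i})}{p_i^{\alpha_i}}=\prod_{i=1}^{s}a_i.
$$
Consequently $\big(f(n)/n\big)^{1/\omega(n)}=\big(\prod_{i=1}^{s}a_i\big)^{1/s}$ is exactly the geometric mean of the $a_i$.

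Next I would invoke AM--GM on the $s$ quantities $a_1,\dots,a_s$, namely $\frac{1}{s}\sum_{i=1}^{s}a_i\ge\big(\prod_{i=1}^{s}a_i\big)^{1/s}$, which after multiplying through by $ns=n\omega(n)$ yields
$$
\Phi_f(n)=n\sum_{i=1}^{s}a_i\ge n\omega(n)\Big(\prod_{i=1}^{s}a_i\Big)^{1/s}=n\omega(n)\Big(\tfrac{f(n)}{n}\Big)^{1/\omega(n)},
$$
which is the desired inequality.

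The main obstacle is not the algebra but the hypothesis: AM--GM requires $a_i\ge 0$, i.e. $f(p_i^{\alpha_i})\ge 0$ for each prime power dividing $n$. The statement as written does not flag this, so I expect the substantive point to be noting that the inequality holds precisely when $f$ takes nonnegative values on the prime powers $p_i^{\alpha_i}$ (in particular for any nonnegative multiplicative $f$); without such a sign condition the $s$-th root and the inequality direction are not guaranteed. I would therefore state this positivity assumption explicitly and then the proof is immediate from the two displays above.
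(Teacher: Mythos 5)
Your proof is correct and follows essentially the same route as the paper: apply Theorem \ref{the-2-1}, set $x_i=f(p_i^{\alpha_i})/p_i^{\alpha_i}$, invoke AM--GM, and use multiplicativity to identify $\prod_i f(p_i^{\alpha_i})=f(n)$. Your remark that the nonnegativity of $f$ on prime powers must be assumed for AM--GM to apply is a fair point that the paper's own proof also leaves implicit (it quietly takes the $x_i$ in $\mathbb{R}^{+*}$ without justification).
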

	
	\begin{proof}
		Let $n$ a positive integer such that $n = p_1^{\alpha _1}\ldots  p_s^{\alpha _s}$ and let $f$ an arithmetic function .\\
		for every $x_1,x_2,\ldots,x_s\in\mathbb{R}^{+*}$, by The AM-GM inequality we have :
		$$
		\frac{x_1 + x_2 + \ldots + x_n}{n} \geq \sqrt[n]{x_1 \cdot x_2 \cdot \ldots \cdot x_n} 
		$$
		So if we put $x_i=\frac{f(p_i^{\alpha_i})}{p_i^{\alpha_i}}$ we find that: 
		\begin{align*}
			\frac{1}{s}\sum \limits_{i=1}^s \frac{f(p_i^{\alpha_i})}{p_i^{\alpha_i}} \geq 
			\sqrt[s]{\prod \limits_{i=1}^s \frac{f(p_i^{\alpha_i})}{p_i^{\alpha_i}}}
			\Longrightarrow 
			n\sum \limits_{i=1}^s \frac{f(p_i^{\alpha_i})}{p_i^{\alpha_i}}\geq
			ns \sqrt[s]{\frac{\prod \limits_{i=1}^s f(p_i^{\alpha_i})}{\prod \limits_{i=1}^s p_i^{\alpha_i}}}
		\end{align*}
		Since $\prod \limits_{i=1}^s f(p_i^{\alpha_i})=f(n)$ and $\prod \limits_{i=1}^s p_i^{\alpha_i}=n$ , Then we have : 
		$$
		\Phi_f(n) \geq ns \sqrt[s]{ \frac{f(n)}{n}} 
		$$
	\end{proof}
	\begin{proposition}
		Let $n$ a positive integer such that  and let $f$ an arithmetic function additive , then  we have : 
		\begin{equation}
			\Phi_{f}(n)\leq  \frac{1}{2}nf(n).
		\end{equation}
	\end{proposition}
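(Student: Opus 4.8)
The plan is to reduce the claimed inequality to a term-by-term comparison over the prime-power factors of $n$, combining the closed form from Theorem \ref{the-2-1} with the additivity of $f$. Writing $n = p_1^{\alpha_1}\cdots p_s^{\alpha_s}$, I would first invoke Theorem \ref{the-2-1} to obtain
$$\Phi_f(n) = n\sum_{i=1}^s \frac{f(p_i^{\alpha_i})}{p_i^{\alpha_i}},$$
and then use the additivity of $f$ to rewrite the target right-hand side as
$$\frac{1}{2}nf(n) = \frac{n}{2}\sum_{i=1}^s f(p_i^{\alpha_i}).$$
After cancelling the common factor $n > 0$, it suffices to establish $\sum_{i} \frac{f(p_i^{\alpha_i})}{p_i^{\alpha_i}} \leq \frac{1}{2}\sum_{i} f(p_i^{\alpha_i})$.

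Next, the crucial elementary observation is that every prime power occurring in the factorization satisfies $p_i^{\alpha_i}\geq 2$, because $p_i\geq 2$ and $\alpha_i\geq 1$; consequently $\frac{1}{p_i^{\alpha_i}}\leq \frac{1}{2}$. I would then multiply this bound by $f(p_i^{\alpha_i})$ to get the term-by-term estimate $\frac{f(p_i^{\alpha_i})}{p_i^{\alpha_i}}\leq \frac{1}{2}f(p_i^{\alpha_i})$, and sum over $i=1,\ldots,s$ to recover exactly the reduced inequality, which upon multiplying back by $n$ is the assertion.

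The main obstacle is not the algebra but an implicit positivity hypothesis: multiplying $\frac{1}{p_i^{\alpha_i}}\leq \frac{1}{2}$ by $f(p_i^{\alpha_i})$ preserves the direction of the inequality only when $f(p_i^{\alpha_i})\geq 0$. Hence the statement must be read for additive functions taking nonnegative values on prime powers (as is the case for $\omega$, $\Omega$, or logarithmic-type additive functions); otherwise individual negative terms could reverse the comparison. I would therefore make this nonnegativity assumption explicit in the hypotheses, after which the argument above is immediate and requires no further estimation.
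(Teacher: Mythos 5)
Your argument is exactly the paper's proof: apply Theorem \ref{the-2-1}, observe $p^{\alpha}\geq 2$ so $\frac{1}{p^{\alpha}}\leq\frac{1}{2}$, multiply through by $f(p^{\alpha})$ and sum over the prime powers exactly dividing $n$. Your added remark that this step silently requires $f(p^{\alpha})\geq 0$ is correct and identifies a hypothesis the paper omits without addressing; with that caveat made explicit, the proof is complete.
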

	\begin{proof}
		Let $n$ a positive integer such that $n = p_1^{\alpha _1}\ldots  p_s^{\alpha _s}$ and let $f$ an arithmetic function , then we have .
		$$
		p^{\alpha}\geq 2 \Longrightarrow \frac{1}{p^{\alpha}}\leq \frac{1}{2} \Longrightarrow \frac{f(p^{\alpha})}{p^{\alpha}}\leq \frac{f(p^{\alpha})}{2} \Longrightarrow 
		\sum \limits_{p^{\alpha}||n} \frac{f(p^{\alpha})}{p^{\alpha}}\leq \frac{1}{2}\sum \limits_{p^{\alpha}||n} f(p^{\alpha})
		$$
	\end{proof}
	\begin{proposition}
		Let $n$ a positive integer .\\
		For every multiplicative function $f$ , there are exist a function additive $g$ such that : $\Phi_f(n)=\Phi_g(n)$ , with :
		$$
		g(n)=\sum \limits_{p^{\alpha}||n} f(p^{\alpha})
		$$
	\end{proposition}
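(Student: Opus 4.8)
The plan is to reduce everything to Theorem \ref{the-2-1}, which already tells us that the additive transform of any arithmetic function is completely determined by the function's values on prime powers. First I would check that the proposed $g$ is genuinely additive: taking two coprime integers $m$ and $n$, their prime factorizations are disjoint, so the prime powers $p^{\alpha}$ with $p^{\alpha}\,\|\,mn$ are exactly the union of those exactly dividing $m$ and those exactly dividing $n$. Consequently the defining sum splits cleanly as $g(mn)=g(m)+g(n)$, confirming that $g$ is additive (and in any case $\Phi_g$ is defined for an arbitrary arithmetic function, so this step is mostly for interpretation).

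The core step is to evaluate $g$ on a prime power. If the argument is itself a prime power $p^{\alpha}$, then the only exact-prime-power divisor of $p^{\alpha}$ is $p^{\alpha}$ itself, so the sum $\sum_{q^{\beta}\|p^{\alpha}} f(q^{\beta})$ collapses to the single term $f(p^{\alpha})$. Hence $g(p^{\alpha})=f(p^{\alpha})$ for every prime $p$ and every $\alpha\geq 1$; that is, $f$ and $g$ agree on all prime powers.

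Now I would apply Theorem \ref{the-2-1} to both functions. Writing $n=p_1^{\alpha_1}\cdots p_s^{\alpha_s}$, the theorem gives
\[
\Phi_g(n)=n\sum_{i=1}^s \frac{g(p_i^{\alpha_i})}{p_i^{\alpha_i}} = n\sum_{i=1}^s \frac{f(p_i^{\alpha_i})}{p_i^{\alpha_i}}=\Phi_f(n),
\]
where the middle equality uses $g(p_i^{\alpha_i})=f(p_i^{\alpha_i})$ from the previous step, and the outer equalities are just Theorem \ref{the-2-1} applied to $g$ and to $f$ respectively. This establishes the claimed identity $\Phi_f(n)=\Phi_g(n)$.

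As for difficulty, there is no real obstacle: the entire content is that the additive transform only \emph{sees} the values of a function on prime powers, and $g$ was constructed precisely to match $f$ there. The one point to watch is the evaluation $g(p^{\alpha})=f(p^{\alpha})$ — one must remember that a prime power has a single exact-prime-power divisor, namely itself, so the summation reduces to one term. It is also worth remarking that multiplicativity of $f$ is never actually invoked; the equality $\Phi_f=\Phi_g$ holds verbatim for an arbitrary arithmetic function $f$, so the multiplicativity hypothesis is stronger than what the argument requires.
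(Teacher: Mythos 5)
Your proof is correct and follows essentially the same route as the paper's: both reduce to the observation that $g(p^{\alpha})=f(p^{\alpha})$ on prime powers, so that Theorem \ref{the-2-1} forces $\Phi_f=\Phi_g$. Your added remarks (that $g$ is indeed additive, and that multiplicativity of $f$ is never actually used) are accurate elaborations of details the paper leaves implicit.
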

	\begin{proof}
		Let \(f\) be a multiplicative function, and define \(g(n) = \sum_{p^{\alpha} \parallel n} f(p^{\alpha})\). Then, for any prime power \(p^{\alpha}\), we have \(g(p^{\alpha}) = f(p^{\alpha})\), implying \(\Phi_f(n) = \Phi_g(n)\).
	\end{proof}
	
	\begin{proposition}
		Given two multiplicative arithmetic functions $f$ and $g$  ,then for every positive integer $n$   we have : 
		\begin{equation}
			f(n)=g(n)\Longleftrightarrow \Phi_{f}(n)=\Phi_g(n)
		\end{equation}
	\end{proposition}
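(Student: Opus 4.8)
The plan is to exploit the fact, visible from Theorem \ref{the-2-1} (or directly from condition 1 of Definition \ref{def-2-1}), that the additive transform $\Phi_f$ agrees with $f$ on every prime power. Indeed, taking $s=1$ in Theorem \ref{the-2-1} gives $\Phi_f(p^{\alpha}) = p^{\alpha}\cdot \frac{f(p^{\alpha})}{p^{\alpha}} = f(p^{\alpha})$. Since a multiplicative function is completely determined by its values at prime powers, controlling $\Phi_f$ at prime powers is the same as controlling $f$ itself. I would treat the two implications separately, reading the biconditional globally, i.e.\ as the equivalence between $f=g$ as functions and $\Phi_f=\Phi_g$ as functions.

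For the forward implication I would argue that if $f(n)=g(n)$ holds for every $n$, then in particular $f(p^{\alpha})=g(p^{\alpha})$ at each prime power, so the two finite sums $\sum_{i} f(p_i^{\alpha_i})/p_i^{\alpha_i}$ and $\sum_{i} g(p_i^{\alpha_i})/p_i^{\alpha_i}$ produced by Theorem \ref{the-2-1} coincide term by term. Multiplying by $n$ then yields $\Phi_f(n)=\Phi_g(n)$ for every $n$. This direction is essentially automatic and does not even use multiplicativity; it only records that $\Phi$ is a deterministic transform of $f$.

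The reverse implication is the substantive one. Assuming $\Phi_f(n)=\Phi_g(n)$ for all $n$, I would specialize to $n=p^{\alpha}$ a prime power. By the remark above, $\Phi_f(p^{\alpha})=f(p^{\alpha})$ and $\Phi_g(p^{\alpha})=g(p^{\alpha})$, so the hypothesis forces $f(p^{\alpha})=g(p^{\alpha})$ for every prime $p$ and every $\alpha\geq 1$. Finally, for an arbitrary $n=p_1^{\alpha_1}\cdots p_s^{\alpha_s}$, multiplicativity of $f$ and $g$ gives
\[
f(n)=\prod_{i=1}^{s} f(p_i^{\alpha_i})=\prod_{i=1}^{s} g(p_i^{\alpha_i})=g(n),
\]
which closes the argument.

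I expect the only real point requiring care to be the interpretation of the statement rather than any computation: the equivalence cannot hold pointwise at a single $n$, because $\Phi_f(n)$ depends on all of the prime-power components of $n$ through a \emph{sum}, whereas $f(n)$ depends on them through a \emph{product}; two multiplicative functions can agree at one $n$ (equal products) while their transforms differ there (unequal sums). Read globally, however, the proof reduces entirely to the two facts already at hand — agreement of $\Phi_f$ with $f$ on prime powers, and the determination of a multiplicative function by its prime-power values — so no genuine obstacle remains.
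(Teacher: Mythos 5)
Your proof is correct, and in fact the paper offers no proof of this proposition at all --- it is stated bare --- so there is nothing to compare against; your route (via the $s=1$ case of Theorem \ref{the-2-1}, or equivalently condition 1 of Definition \ref{def-2-1}, plus the fact that a multiplicative function is determined by its values on prime powers) is the natural and complete argument under the global reading. Your closing caveat deserves emphasis, because it is not merely interpretive: as literally written, with ``for every positive integer $n$'' governing a pointwise biconditional, the proposition is false. Take $f,g$ multiplicative with $f(2)=1$, $f(3)=2$, $g(2)=2$, $g(3)=1$; then $f(6)=f(2)f(3)=2=g(2)g(3)=g(6)$, yet $\Phi_f(6)=3f(2)+2f(3)=7$ while $\Phi_g(6)=3g(2)+2g(3)=8$, so the right-hand side of the biconditional fails at $n=6$ even though the left-hand side holds there. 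The statement is salvageable only with the quantifier moved inside each side of the equivalence ($f=g$ as functions if and only if $\Phi_f=\Phi_g$ as functions), which is precisely the version you proved; it would be worth flagging that the proposition should be restated accordingly.
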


	After we have seen that many fundamental properties of the \textit{\textbf{additive transform}} of an arithmetic function. We complete this article by changing our point of view slightly and demonstrate that \textit{\textbf{additive transform}} can also be studied in terms of the Dirichlet convolutions .
	
	\medskip
	
	Let $f$ and $g$ be arithmetic functions. Their {\it Dirichlet convolution} is
	$$
	(f\ast g)(n)=\sum_{\substack{a,b=1\\ab=n}}^n f(a)g(b)=\sum_{\substack{d|n}}^n f(d)g\left(\frac{n}{d} \right) .
	$$
	
	where the sum extends over all positive divisors $d$ of $n$ , or equivalently over all distinct pairs $(a, b)$ of positive integers whose product is $n$.\\
	In particular, we have $(f*g)(1)=f(1)g(1)$ ,$(f*g)(p)=f(1)g(p)+f(p)g(1)$ for any prime $p$ and for any power prime $p^m$ we have :
	\begin{equation}
		(f*g)(p^m)=\sum \limits_{j=0}^m f(p^j)g(p^{m-j})
	\end{equation} 
	This product occurs naturally in the study of Dirichlet series such as the Riemann zeta function. It describes the multiplication of two Dirichlet series in terms of their coefficients: 
	\begin{equation}\label{eq:5}
		\bigg(\sum \limits_{n\geq 1}\frac{\big(f*g\big)(n)}{n^s}\bigg)=\bigg(\sum \limits_{n\geq 1}\frac{f(n)}{n^s} \bigg)
		\bigg( \sum \limits_{n\geq 1}\frac{g(n)}{n^s} \bigg)
	\end{equation}
	with Riemann zeta function or  is defined by : $$\zeta(s)= \sum \limits_{n\geq 1} \frac{1}{n^s}$$
	These functions are widely studied in the literature (see, e.g., \cite{book1, book2, book3}).\\

	\begin{proposition}\label{prop-2-5}
		Given two arithmetic functions $f$ and $g$, and lets $n$ and $m$ two positive integer such that $gcd(n,m)=1$ ,  if $f$ is multiplicative function ,Then we have :  
		\begin{equation}
			(f*\Phi_g)(nm)=(Id*f)(n).(f*\Phi_g)(m)+(Id*f)(m).(f*\Phi_g)(n)
		\end{equation}
	\end{proposition}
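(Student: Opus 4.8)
The plan is to expand the Dirichlet convolution $(f*\Phi_g)(nm)$ directly as a sum over divisors and then exploit the coprimality $\gcd(n,m)=1$ to factor everything. Since $n$ and $m$ are coprime, every divisor $d$ of $nm$ factors uniquely as $d=d_1 d_2$ with $d_1\mid n$, $d_2\mid m$ and $\gcd(d_1,d_2)=1$; the complementary divisor then satisfies $\tfrac{nm}{d}=\tfrac{n}{d_1}\cdot\tfrac{m}{d_2}$ with $\gcd\!\big(\tfrac{n}{d_1},\tfrac{m}{d_2}\big)=1$. Substituting this parametrization converts the single divisor sum $\sum_{d\mid nm}f(d)\Phi_g(\tfrac{nm}{d})$ into a double sum indexed by $d_1\mid n$ and $d_2\mid m$.

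Next I would invoke the two structural properties that are available. The multiplicativity of $f$ gives $f(d_1 d_2)=f(d_1)f(d_2)$ because $\gcd(d_1,d_2)=1$, and the Leibniz rule of Definition \ref{def-2-1} applied to the coprime pair $\big(\tfrac{n}{d_1},\tfrac{m}{d_2}\big)$ gives $\Phi_g\!\big(\tfrac{nm}{d}\big)=\tfrac{n}{d_1}\Phi_g\!\big(\tfrac{m}{d_2}\big)+\tfrac{m}{d_2}\Phi_g\!\big(\tfrac{n}{d_1}\big)$. Inserting both, the double sum splits into two pieces according to the two terms of the Leibniz expansion.

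The decisive step is then to recognize each piece as a product of two familiar convolutions in which the $d_1$ and $d_2$ variables separate. In the first piece the $d_1$-sum is $\sum_{d_1\mid n}f(d_1)\tfrac{n}{d_1}=(f*Id)(n)=(Id*f)(n)$ and the $d_2$-sum is $\sum_{d_2\mid m}f(d_2)\Phi_g\!\big(\tfrac{m}{d_2}\big)=(f*\Phi_g)(m)$, so this piece equals $(Id*f)(n)\,(f*\Phi_g)(m)$. By the symmetric computation the second piece equals $(Id*f)(m)\,(f*\Phi_g)(n)$, and adding the two yields the claimed identity. Here I use that Dirichlet convolution is commutative and that $Id(k)=k$, so that the factor $\tfrac{n}{d_1}$ is exactly $Id(\tfrac{n}{d_1})$.

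I expect the only genuine obstacle to lie in the bookkeeping of the first two paragraphs, namely verifying that $d\mapsto(d_1,d_2)$ is a true bijection from the divisors of $nm$ onto pairs $(d_1,d_2)$ with $d_1\mid n$, $d_2\mid m$, and checking that after inserting the Leibniz expansion the two summation variables really do separate. Once the double sum is correctly rearranged, identifying the factors $(Id*f)$ and $(f*\Phi_g)$ is immediate and needs no hypothesis beyond the multiplicativity of $f$, which is precisely what makes $f(d_1 d_2)=f(d_1)f(d_2)$ legitimate.
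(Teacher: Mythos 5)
Your proposal is correct and follows essentially the same route as the paper's proof: split each divisor of $nm$ as $d_1d_2$ with $d_1\mid n$, $d_2\mid m$, apply the multiplicativity of $f$ and the Leibniz rule for $\Phi_g$ to the resulting coprime factors, and separate the double sum into the two products $(Id*f)(n)(f*\Phi_g)(m)$ and $(Id*f)(m)(f*\Phi_g)(n)$. The only difference is cosmetic: you write the convolution as $\sum_{d\mid nm}f(d)\Phi_g(nm/d)$ while the paper uses $\sum_{d\mid nm}f(nm/d)\Phi_g(d)$, which agree by commutativity of the Dirichlet product.
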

	\begin{proof}
		Lets $n$ and $m$ two positive integers such that $gcd(n,m)=1$, and lets $f$ and $g$  two arithmetics function  , if $f$ is multuplicative then we have :
		\begin{align*}
			(f*\Phi_g)(nm) &=\sum \limits_{d|nm} f\big(\frac{nm}{d}\big)\Phi_g(d)
			=\sum \limits_{\underset{d_2|m}{d_1|n}} f(\frac{nm}{d_1d_2})\Phi_g(d_1d_2)\\
			&=\sum \limits_{\underset{d_2|m}{d_1|n}} f(\frac{n}{d_1}) f(\frac{m}{d_2}) \bigg(d_1\Phi_g(d_2)+d_2\Phi_g(d_1)\bigg)
			\\
			&=\sum \limits_{\underset{d_2|m}{d_1|n}}\bigg(  d_1f(\frac{n}{d_1}) f(\frac{m}{d_2})\Phi_g(d_2)
			+d_2f(\frac{m}{d_2})f(\frac{n}{d_1})\Phi_g(d_1)
			\bigg)
			\\
			& =\bigg(\sum \limits_{d_1|n} d_1f(\frac{n}{d_1})\bigg)\bigg(\sum \limits_{d_2|m} f(\frac{m}{d_2})\Phi_g(d_2)\bigg)
			+
			\bigg(\sum \limits_{d_2|m} d_2f(\frac{m}{d_2})\bigg)\bigg(\sum \limits_{d_1|n} f(\frac{n}{d_1})\Phi_g(d_1)\bigg)
			\\
			&=\big(Id*f\big)(n).\big(f*\Phi_g\big)(m)+\big(Id*f\big)(m).\big(f*\Phi_g\big)(n)
		\end{align*}
	\end{proof}
	
	\begin{corollary}\label{cor-2-1}
		Let $n$ a positive integer   , for every multiplicative function we have : 
		\begin{equation}
			\big(f\ast\Phi_{g}\big)(n)
			=(Id*f)(n)
			\sum \limits_{p^{\alpha}||n} \frac{\big(f\ast\Phi_{g}\big)(p^{\alpha})}{(Id*f)(p^{\alpha})}
		\end{equation}
	\end{corollary}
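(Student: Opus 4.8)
The plan is to read this corollary as the exact analogue of Theorem \ref{the-2-1}, with the weight $Id(n)=n$ appearing in Definition \ref{def-2-1} replaced by the multiplicative function $G:=Id\ast f$, applied to the convolution $F:=f\ast\Phi_g$. First I would record two preliminary facts. Since $Id$ and $f$ are both multiplicative, their Dirichlet convolution $G=Id\ast f$ is again multiplicative, so $G(n)=\prod_{p^{\alpha}||n}G(p^{\alpha})$; in particular $G(n)=G(p^{\alpha})G(m)$ whenever $n=p^{\alpha}m$ with $\gcd(p^{\alpha},m)=1$. Secondly, Proposition \ref{prop-2-5} states precisely that for coprime $m,n$ one has $F(nm)=G(n)F(m)+G(m)F(n)$, a Leibniz-type identity in which $G$ plays the structural role that $Id$ played in Definition \ref{def-2-1}. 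Note that only $f$ needs to be multiplicative here; $g$ may be arbitrary, since Proposition \ref{prop-2-5} requires multiplicativity of $f$ alone.

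With these two facts in hand I would run the same telescoping induction on the number $s$ of distinct prime factors of $n$ that proves Theorem \ref{the-2-1}. Write $n=p_1^{\alpha_1}m$ with $m=p_2^{\alpha_2}\cdots p_s^{\alpha_s}$ coprime to $p_1^{\alpha_1}$. The Leibniz identity gives $F(n)=G(p_1^{\alpha_1})F(m)+G(m)F(p_1^{\alpha_1})$, and the induction hypothesis on $m$ supplies $F(m)=G(m)\sum_{i=2}^{s}F(p_i^{\alpha_i})/G(p_i^{\alpha_i})$. Substituting and using $G(n)=G(p_1^{\alpha_1})G(m)$ collapses the first summand to $G(n)\sum_{i=2}^{s}F(p_i^{\alpha_i})/G(p_i^{\alpha_i})$, while the second summand $G(m)F(p_1^{\alpha_1})$ equals $G(n)\,F(p_1^{\alpha_1})/G(p_1^{\alpha_1})$ by the same factorization; adding the two terms reconstitutes the full sum over $i=1,\dots,s$, which upon rewriting $G=Id\ast f$ and $F=f\ast\Phi_g$ is exactly the asserted formula. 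The base case $s=1$ is immediate, since then the sum has a single term and the identity is a tautology.

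The only genuine subtlety — and the step I would flag explicitly — is the division by $G(p^{\alpha})=(Id\ast f)(p^{\alpha})$ that appears in the displayed formula: the identity as written presupposes these prime-power values are nonzero. I would therefore either adopt this as a standing hypothesis or, more cleanly, observe that the cleared-denominator version of each step holds unconditionally, so that the result is valid wherever the quotient is defined. Beyond this caveat the argument is purely formal, resting entirely on the multiplicativity of $G$ and on the single structural identity of Proposition \ref{prop-2-5}; no new estimate or idea is required, and I would present the write-up as a brief induction emphasizing that Corollary \ref{cor-2-1} is simply the ``Theorem \ref{the-2-1}--shaped'' consequence of that proposition.
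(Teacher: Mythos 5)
Your proposal is correct and follows essentially the same route as the paper: the paper also applies Proposition \ref{prop-2-5} repeatedly to peel off one prime power at a time, using the multiplicativity of $Id\ast f$ to rewrite each coefficient as $(Id\ast f)(n)/(Id\ast f)(p_i^{\alpha_i})$, which is exactly your induction on the number of distinct prime factors. Your remark about the implicit hypothesis $(Id\ast f)(p^{\alpha})\neq 0$ is a worthwhile caveat that the paper does not address, but it does not change the argument.
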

	\begin{proof}
		Let $n$ a positive integer such that $n = p_1^{\alpha _1}\ldots  p_s^{\alpha _s}$ and lets $f$ and $g$ two arithmetics functions , if $f$ is multiplicative function Then by using the proposition $(\ref{prop-2-5})$ we have :
		\begin{align*}
			\big(f*\Phi_{g}\big)(n) &=\big(f*\Phi_{g}\big)(p_1^{\alpha _1}\ldots  p_s^{\alpha _s})
			\\
			&=
			\big(Id*f\big)(p_2^{\alpha _2}\ldots  p_s^{\alpha _s}).\big(f*\Phi_{g}\big)(p^{\alpha_1}_1)
			+
			\big(Id*f\big)(p^{\alpha_1}_1).\big(f*\Phi_{g}\big)(p_2^{\alpha _2}\ldots  p_s^{\alpha _s})
			\\
			& =\big(Id*f\big)(n).
			\frac{\big(f*\Phi_{g}\big)(p^{\alpha_1}_1)}{\big(Id*f\big)(p_1^{\alpha _1})}
			+
			\big(Id*f\big)(p^{\alpha_1}_1).
			\bigg[
			\big(Id*f\big)(p_3^{\alpha _3}\ldots  p_s^{\alpha _s}).\big(f*\Phi_{g}\big)(p^{\alpha_2}_2)
			+\\
			&+
			\big(Id*f\big)(p^{\alpha_2}_2).\big(f*\Phi_{g}\big)(p_3^{\alpha _3}\ldots  p_s^{\alpha _s})
			\bigg]
			\\
			&=\big(Id*f\big)(n)\frac{\big(f*\Phi_{g}\big)(p^{\alpha_1}_1)}{\big(Id*f\big)(p_1^{\alpha _1})}
			+
			\big(Id*f\big)(n)\frac{\big(f*\Phi_{g}\big)(p^{\alpha_2}_2)}{\big(Id*f\big)(p_2^{\alpha _2})}
			+\\
			&+
			\big(Id*f\big)(p^{\alpha_1}_1)\big(Id*f\big)(p^{\alpha_2}_2)\big(f*\Phi_{g}\big)(p_3^{\alpha _3}\ldots  p_s^{\alpha _s})
			\\
			\vdots\\
			&=\big(Id*f\big)(n)\frac{\big(f*\Phi_{g}\big)(p^{\alpha_1}_1)}{\big(Id*f\big)(p_1^{\alpha _1})}
			+ \ldots
			+
			\big(Id*f\big)(n)\frac{\big(f*\Phi_{g}\big)(p^{\alpha_s}_s)}{\big(Id*f\big)(p_s^{\alpha _s})}
			\\
			&=\big(Id*f\big)(n)
			\bigg[
			\frac{\big(f*\Phi_{g}\big)(p^{\alpha_1}_1)}{\big(Id*f\big)(p_1^{\alpha _1})}+\ldots
			+ \frac{\big(f*\Phi_{g}\big)(p^{\alpha_s}_s)}{\big(Id*f\big)(p_s^{\alpha _s})}
			\bigg]\\
			&=\big(Id*f\big)(n)\sum \limits_{i=1}^s
			\frac{\big(f*\Phi_{g}\big)(p^{\alpha_i}_i)}{\big(Id*f\big)(p^{\alpha_i}_i)}
		\end{align*}
	\end{proof}
	
	\begin{lemma}\label{lem-2-2}
		Let $p$ be a number prime ,and let $f$ an arithmetic function , then we have :
		\begin{equation}
			\forall\;\alpha\;\in\mathbb{N}^*\;\;\;,\;\big(\mu*\Phi_f\big)(p^\alpha)=
			\left\lbrace
			\begin{array}{lll}
				f(p^\alpha)-f(p^{\alpha-1})  &  if  & \alpha>1  \\
				f(p)  &  if  & \alpha=1
				
			\end{array}
			\right.
		\end{equation}
	\end{lemma}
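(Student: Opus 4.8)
The plan is to expand the Dirichlet convolution directly at the prime power $p^{\alpha}$ and exploit the fact that the Möbius function is supported on squarefree integers. Writing the divisor sum over $1,p,p^2,\ldots,p^{\alpha}$ with $d=p^{j}$, so that $n/d=p^{\alpha-j}$, I would use $\mu(p^{j})=0$ for every $j\geq 2$, together with $\mu(1)=1$ and $\mu(p)=-1$, so that only the two outermost terms survive:
$$
\big(\mu*\Phi_f\big)(p^{\alpha})=\sum_{j=0}^{\alpha}\mu(p^{j})\,\Phi_f(p^{\alpha-j})=\Phi_f(p^{\alpha})-\Phi_f(p^{\alpha-1}).
$$

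Next I would invoke condition~1 of Definition~\ref{def-2-1}, which identifies $\Phi_f$ with $f$ on prime powers of positive exponent, to rewrite $\Phi_f(p^{\alpha})=f(p^{\alpha})$. For $\alpha\geq 2$ the quantity $\Phi_f(p^{\alpha-1})$ is also of this form, so the expression collapses to $f(p^{\alpha})-f(p^{\alpha-1})$, which is exactly the first branch of the claimed formula.

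The case $\alpha=1$ requires one extra ingredient, and this is the only point needing care: here $\Phi_f(p^{\alpha-1})=\Phi_f(1)$, and condition~1 only governs $\Phi_f$ at exponents $\geq 1$, so it says nothing about $\Phi_f(1)$. I would determine $\Phi_f(1)$ from the Leibniz formula (condition~2) by taking $m=n=1$: since $\gcd(1,1)=1$, one gets $\Phi_f(1)=1\cdot\Phi_f(1)+1\cdot\Phi_f(1)=2\Phi_f(1)$, forcing $\Phi_f(1)=0$. Substituting this back gives $\big(\mu*\Phi_f\big)(p)=\Phi_f(p)-\Phi_f(1)=f(p)$, the second branch.

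There is no substantive obstacle here; the whole argument is a short computation once two facts are in hand, namely the vanishing of $\mu$ on prime powers $p^{j}$ with $j\geq 2$ and the boundary value $\Phi_f(1)=0$. The single thing to be vigilant about is not conflating the two cases, since the $\alpha=1$ branch rests on $\Phi_f(1)=0$ rather than on condition~1 of Definition~\ref{def-2-1}.
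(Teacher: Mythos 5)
Your proof is correct and follows essentially the same route as the paper: expand the Dirichlet convolution over the divisors $p^{j}$ of $p^{\alpha}$ and use the vanishing of $\mu(p^{j})$ for $j\geq 2$ so that only the $j=0$ and $j=1$ terms survive. You are in fact slightly more careful than the paper, which in the $\alpha=1$ case writes $\mu(1)\Phi_f(p)+\mu(p)\Phi_f(1)=f(p)$ without justifying that $\Phi_f(1)=0$; your derivation of that boundary value from the Leibniz formula with $m=n=1$ fills this small gap.
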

	\begin{proof}
		Let $p$ be a number prime ,and $\alpha\;\in\mathbb{N}^*$  , if $\alpha=1$ then :
		$$
		\big(\mu*\Phi_f\big)(p^1)=\mu(1)\Phi_f(p)+\mu(p)\Phi_f(1)=f(p)
		$$
		If $\alpha>1$ then we have :
		$$
		\big(\mu*\Phi_f\big)(p^\alpha)=\sum \limits_{j=0}^{\alpha-1} \mu(p^j)\Phi_f(p^{\alpha-j})=
		\sum \limits_{j=0}^{\alpha-1} \mu(p^j)f(p^{\alpha-j})=
		\mu(p^0)f(p^{\alpha})+\ldots+\mu(p^{\alpha-1})f(p)
		$$
		Since for every integer $m\geq 2$ we know that $\mu(p^m)=0$ , then : 
		$$
		\big(\mu*\Phi_f\big)(p^\alpha)=\mu(1)f(p^{\alpha})+\mu(p)f(p^{\alpha-1})=f(p^\alpha)-f(p^{\alpha-1})
		$$
	\end{proof}
	
	\begin{proposition}\label{prop-2-6}
		Let $n$ a positive integer such that $n = p_1^{\alpha _1}\ldots  p_s^{\alpha _s}$ and let $f$ an arithmetic function , we have :  
		\begin{equation}
			\big(\mu*\Phi_f\big)(n)=
			\left\lbrace
			\begin{array}{lll}
				\varphi(n)\sum \limits_{p^{\alpha}||n}
				\frac{f(p^{\alpha})-f(p^{\alpha-1})}{p^{\alpha}-p_i^{\alpha-1}}  &  if  & n \text{ is a perfect square}  \\
				\varphi(n)\sum \limits_{p^{\alpha}||n}
				\frac{f(p)}{p-1}  &  if  & n \text{ is a squarefree}	
			\end{array}
			\right.
		\end{equation}
	\end{proposition}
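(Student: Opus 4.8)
The plan is to read this proposition as nothing more than the specialization of Corollary \ref{cor-2-1} to the M\"obius function, combined with the prime-power evaluation already recorded in Lemma \ref{lem-2-2}. First I would apply Corollary \ref{cor-2-1} with the (multiplicative) function $\mu$ playing the role of $f$ there and our $f$ playing the role of $g$; since $\mu$ is multiplicative the hypothesis is met, and the corollary gives at once
\[
\big(\mu * \Phi_f\big)(n) = (Id * \mu)(n) \sum_{p^{\alpha} || n} \frac{\big(\mu * \Phi_f\big)(p^{\alpha})}{(Id * \mu)(p^{\alpha})}.
\]

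The second step is to put $Id * \mu$ in closed form. I would invoke the classical totient identity $Id * \mu = \varphi$, which is just M\"obius inversion applied to the well-known relation $\sum_{d \mid n} \varphi(d) = n$. Evaluating at prime powers then gives $(Id * \mu)(p^{\alpha}) = \varphi(p^{\alpha}) = p^{\alpha} - p^{\alpha - 1}$, and in particular $(Id * \mu)(p) = p - 1$; these are exactly the denominators appearing in the two stated formulas.

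The third step substitutes the value of $\big(\mu * \Phi_f\big)(p^{\alpha})$ furnished by Lemma \ref{lem-2-2}, namely $f(p^{\alpha}) - f(p^{\alpha - 1})$ when $\alpha > 1$ and $f(p)$ when $\alpha = 1$. Dividing by $\varphi(p^{\alpha})$ therefore turns each summand into $\frac{f(p^{\alpha}) - f(p^{\alpha-1})}{p^{\alpha} - p^{\alpha-1}}$ in the first regime and $\frac{f(p)}{p-1}$ in the second, while the common factor $\varphi(n)$ is pulled out front.

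Finally I would separate the two cases solely according to the arithmetic shape of $n$. If $n$ is a perfect square every exponent $\alpha_i$ is even and hence $\geq 2 > 1$, so every prime-power term is governed by the $\alpha > 1$ branch of Lemma \ref{lem-2-2}, producing the first formula; if $n$ is squarefree every $\alpha_i = 1$, so every term is governed by the $\alpha = 1$ branch, producing the second. I do not anticipate a real obstacle here: the only steps needing attention are recalling $Id * \mu = \varphi$ (rather than leaving the convolution unevaluated) and noticing that the two hypotheses on $n$ are precisely what force a single uniform branch of Lemma \ref{lem-2-2} across all prime powers --- for a general $n$ the two branches would be intermixed, which is why the statement restricts to these two extreme shapes.
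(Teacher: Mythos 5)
Your proposal is correct and follows exactly the route of the paper's own proof: substitute $\mu$ for the multiplicative function in Corollary \ref{cor-2-1}, identify $(Id*\mu)(n)=\varphi(n)$ with $\varphi(p^{\alpha})=p^{\alpha}-p^{\alpha-1}$, and plug in the prime-power values from Lemma \ref{lem-2-2}, splitting cases according to whether all exponents exceed $1$ or all equal $1$. Your closing observation about why the two hypotheses on $n$ each force a single uniform branch of the lemma is a welcome clarification that the paper leaves implicit.
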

	\begin{proof}
		Let $n$ a positive integer ,Since $\mu*Id=\varphi$ , then substitiling $f=\mu$ into corollary $(\ref{cor-2-1})$ to found : 
		$$
		\big(\mu\ast\Phi_{f}\big)(n)
		=(Id*\mu)(n)
		\sum \limits_{p^{\alpha}||n} \frac{\big(\mu\ast\Phi_{f}\big)(p^{\alpha})}{(Id*\mu)(p^{\alpha})}
		=\varphi(n)
		\sum \limits_{p^{\alpha}||n} \frac{\big(\mu\ast\Phi_{f}\big)(p^{\alpha})}{\varphi(p^{\alpha})}
		$$
		Since $\varphi(p^{\alpha})=p^{\alpha}-p^{\alpha-1}$ and by usine Lemma $(\ref{lem-2-2})$ we have : 
		$$
		\big(\mu*\Phi_f\big)(n)=
		\left\lbrace
		\begin{array}{lll}
			\varphi(n)\sum \limits_{p^{\alpha}||n}
			\frac{f(p^{\alpha})-f(p^{\alpha-1})}{p^{\alpha}-p^{\alpha-1}}  &  if  & n \text{ is a perfect square}  \\
			\varphi(n)\sum \limits_{p^{\alpha}||n}
			\frac{f(p)}{p-1}  &  if  & n \text{ is a squarefree}	
		\end{array}
		\right.
		$$
	\end{proof}
	\begin{remark}
		if $f$ is completely additive then we have : 
		\begin{equation}
			\big(\mu*\Phi_f\big)(n)=\sum \limits_{p^{\alpha}||n}\frac{f(p)}{p^{\alpha}-p_i^{\alpha-1}}
		\end{equation}
	\end{remark}
	\begin{proposition}\label{prop-2-7}
		Let $n$ a positive integer   ,if $f$ is an arithmetic function additive then  we have : 
		\begin{equation}
			\Phi_{Id.f}(n)=nf(n)
		\end{equation}
	\end{proposition}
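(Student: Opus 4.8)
The plan is to reduce the claim directly to the explicit formula for the additive transform established in Theorem \ref{the-2-1}, using the additivity of $f$ only at the very last step. First I would fix the factorization $n = p_1^{\alpha_1}\cdots p_s^{\alpha_s}$ and interpret $Id.f$ as the pointwise product $(Id.f)(m) = m\,f(m)$, so that $(Id.f)(p_i^{\alpha_i}) = p_i^{\alpha_i}\,f(p_i^{\alpha_i})$ for each prime power in the factorization.

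Next I would apply Theorem \ref{the-2-1} with the arithmetic function $Id.f$ in place of $f$, which gives
\begin{equation*}
	\Phi_{Id.f}(n) = n\sum_{i=1}^{s}\frac{(Id.f)(p_i^{\alpha_i})}{p_i^{\alpha_i}}
	= n\sum_{i=1}^{s}\frac{p_i^{\alpha_i}\,f(p_i^{\alpha_i})}{p_i^{\alpha_i}}
	= n\sum_{i=1}^{s} f(p_i^{\alpha_i}).
\end{equation*}
The factor $p_i^{\alpha_i}$ introduced by $Id$ exactly cancels the denominator $p_i^{\alpha_i}$ coming from the additive transform, which is the crux of why the identity function is the right multiplier here.

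Finally, since $f$ is additive and the prime powers $p_i^{\alpha_i}$ are pairwise coprime, the decomposition recalled just after the definition of additive functions gives $f(n) = \sum_{i=1}^{s} f(p_i^{\alpha_i})$. Substituting this into the previous display yields $\Phi_{Id.f}(n) = n\,f(n)$, as claimed.

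There is essentially no genuine obstacle in this argument: it is a one-line consequence of Theorem \ref{the-2-1} together with the multiplicative-to-additive decomposition of an additive function. The only point requiring mild care is notational, namely confirming that $Id.f$ denotes the pointwise product (so that the $p_i^{\alpha_i}$ factors cancel) rather than a Dirichlet convolution; once that is clarified, the computation is immediate.
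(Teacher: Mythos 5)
Your argument is correct and is essentially identical to the paper's own proof: both apply Theorem \ref{the-2-1} to the pointwise product $Id.f$, cancel the $p_i^{\alpha_i}$ factors, and invoke the additivity of $f$ to identify $\sum_{p^{\alpha}\| n} f(p^{\alpha})$ with $f(n)$. Nothing further is needed.
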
	
	\begin{proof}
		Let $n$ a positive integer   ,and $f$ is an arithmetic function additive, then by using the theorem $(\ref{the-2-1})$ we have :
		$$
		\Phi_{Id.f}(n)=n\sum \limits_{p^{\alpha}||n}\frac{Id\big(p^{\alpha}\big)f\big(p^{\alpha}\big)}{p^{\alpha}}
		=n\sum\limits_{p^{\alpha}||n}\frac{p^{\alpha}f\big(p^{\alpha}\big)}{p^{\alpha}}
		=n\sum\limits_{p^{\alpha}||n} f\big(p^{\alpha}\big)
		=nf(n)
		$$
	\end{proof}
	\begin{theorem}
		Let $n$ a positive integer   , for every arithmetic function additive $f$  we have : 
		\begin{equation}
			\big(\mu*f.Id\big)(n)
			=\varphi(n)f(n)+\varphi(n)
			\sum \limits_{p^{\alpha}||n} \frac{f(p^{\alpha})-f(p^{\alpha-1})}{p-1}
		\end{equation}
	\end{theorem}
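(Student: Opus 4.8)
The plan is to reduce the convolution $\mu * (f\cdot Id)$ to the additive-transform machinery already developed, and then localize everything at prime powers. The crucial first observation is that, for an additive function $f$, Proposition~\ref{prop-2-7} gives $\Phi_{Id\cdot f}(n)=nf(n)=(f\cdot Id)(n)$ (here $(f\cdot Id)(m)=m\,f(m)$ is the pointwise product); in other words the additive transform of $f\cdot Id$ is $f\cdot Id$ itself. This self-reproducing identity is exactly what unlocks the convolution results, because it lets me rewrite $\mu * (f\cdot Id)$ as $\mu * \Phi_g$ with $g=f\cdot Id$, a form to which Corollary~\ref{cor-2-1} applies.

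With this in hand I would invoke Corollary~\ref{cor-2-1} taking the multiplicative function to be $\mu$ and $\Phi_g = f\cdot Id$. Since $Id * \mu = \varphi$, the corollary yields
$$\big(\mu * f\cdot Id\big)(n)=\varphi(n)\sum_{p^{\alpha}||n}\frac{\big(\mu * f\cdot Id\big)(p^{\alpha})}{\varphi(p^{\alpha})},$$
so the whole problem collapses onto prime powers, in the same spirit as Lemma~\ref{lem-2-2} and Proposition~\ref{prop-2-6}. I would then compute the prime-power values directly: expanding the Dirichlet convolution and using $\mu(p^{j})=0$ for $j\geq 2$ kills all but two terms, leaving $\big(\mu * f\cdot Id\big)(p^{\alpha})=p^{\alpha}f(p^{\alpha})-p^{\alpha-1}f(p^{\alpha-1})$. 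Dividing by $\varphi(p^{\alpha})=p^{\alpha-1}(p-1)$ gives the clean quotient $\dfrac{p\,f(p^{\alpha})-f(p^{\alpha-1})}{p-1}$.

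The main step, and the only spot where anything beyond bookkeeping happens, is the algebraic rearrangement that splits this quotient into the two pieces appearing in the claimed formula. Writing $p\,f(p^{\alpha})=(p-1)f(p^{\alpha})+f(p^{\alpha})$ in the numerator gives
$$\frac{p\,f(p^{\alpha})-f(p^{\alpha-1})}{p-1}=f(p^{\alpha})+\frac{f(p^{\alpha})-f(p^{\alpha-1})}{p-1}.$$
Substituting this back and summing over $p^{\alpha}||n$, the first pieces sum to $\sum_{p^{\alpha}||n}f(p^{\alpha})=f(n)$ by additivity of $f$, while the second pieces reproduce the remaining sum verbatim. Multiplying through by $\varphi(n)$ then produces exactly $\varphi(n)f(n)+\varphi(n)\sum_{p^{\alpha}||n}\frac{f(p^{\alpha})-f(p^{\alpha-1})}{p-1}$, which is the assertion.

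I do not expect a genuine obstacle here; the argument is a clean chaining of Proposition~\ref{prop-2-7} with Corollary~\ref{cor-2-1}. The only care required is the edge case $\alpha=1$, where $f(p^{\alpha-1})=f(1)=0$ because additive functions vanish at $1$; one should check that the prime-power computation and the subsequent simplification remain valid there, so that no separate treatment of squarefree versus higher-power parts of $n$ is needed and the single formula holds uniformly.
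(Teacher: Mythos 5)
Your proposal is correct and follows essentially the same route as the paper: identify $\mu*(f\cdot Id)$ with $\mu*\Phi_{Id\cdot f}$ via Proposition~\ref{prop-2-7}, reduce to prime powers through the convolution identity, and perform the same algebraic split of the numerator to extract $f(p^{\alpha})$ plus the quotient $\frac{f(p^{\alpha})-f(p^{\alpha-1})}{p-1}$. The only (cosmetic, and in fact slightly cleaner) difference is that you apply Corollary~\ref{cor-2-1} and compute $(\mu*f\cdot Id)(p^{\alpha})$ directly, noting $f(1)=0$ so that the $\alpha=1$ case needs no separate treatment, rather than passing through the case-split formula of Proposition~\ref{prop-2-6} as the paper does.
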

	
	\begin{proof}
		Let $n$ a positive integer   , and $f$ an arithmetic function additive, by using proposition $(\ref{prop-2-6})$ we have :
		\begin{align*}
			\big(\mu\ast\Phi_{Id.f}\big)(n) &=\varphi(n)\sum \limits_{p^{\alpha}||n}
			\frac{p^{\alpha}f(p^{\alpha})-p^{\alpha-1}f(p^{\alpha-1})}{p^{\alpha}-p^{\alpha-1}}
			\\
			&=\varphi(n)\sum \limits_{p^{\alpha}||n}
			\frac{p^{\alpha}f(p^{\alpha})-p^{\alpha-1}f(p^{\alpha})+p^{\alpha-1}f(p^{\alpha})-p^{\alpha-1}f(p^{\alpha-1})}{p^{\alpha}-p^{\alpha-1}}
			\\
			&=\varphi(n)\sum \limits_{p^{\alpha}||n}
			\frac{f(p^{\alpha})(p^{\alpha}-p^{\alpha-1})}{p^{\alpha}-p^{\alpha-1}}+
			\frac{p^{\alpha-1}(f(p^{\alpha})-f(p^{\alpha-1}))}{p^{\alpha-1}(p-1)}
			\\
			&=\varphi(n)\sum \limits_{p^{\alpha}||n} f(p^{\alpha})+\varphi(n)
			\sum \limits_{p^{\alpha}||n} \frac{f(p^{\alpha})-f(p^{\alpha-1})}{p-1}
			\\
			&=\varphi(n)f(n)+\varphi(n)
			\sum \limits_{p^{\alpha}||n} \frac{f(p^{\alpha})-f(p^{\alpha-1})}{p-1}
		\end{align*}
		Since $\big(\mu\ast\Phi_{Id.f}\big)(n)=nf(n)$ by the proposition $(\ref{prop-2-7})$ then we have :
		$$
		\big(\mu*f.Id\big)(n)
		=\varphi(n)f(n)+\varphi(n)
		\sum \limits_{p^{\alpha}||n} \frac{f(p^{\alpha})-f(p^{\alpha-1})}{p-1}
		$$
	\end{proof}
	If f is completely additive then $f(p^{\alpha})-f(p^{\alpha-1})=\alpha f(p)-(\alpha-1)f(p)=f(p)$ , so we have this corollary :
	\begin{corollary}\label{cor-2-2}
		Let $n$ a positive integer not null , If $f$ is completely additive then we have : 
		\begin{equation}
			\big(\mu*Id.f\big)(n)=\varphi(n)f(n)+\varphi(n)
			\sum \limits_{p^{\alpha}||n} \frac{f(p)}{p-1}		
		\end{equation}
	\end{corollary}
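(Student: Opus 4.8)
The plan is to obtain this corollary as an immediate specialization of the immediately preceding Theorem, which already covers every additive $f$. Since a completely additive function satisfies $f(mn)=f(m)+f(n)$ for \emph{all} $m,n$, in particular for coprime $m,n$, it is additive; hence the Theorem applies verbatim and gives
$$\big(\mu*Id.f\big)(n)=\varphi(n)f(n)+\varphi(n)\sum_{p^{\alpha}||n}\frac{f(p^{\alpha})-f(p^{\alpha-1})}{p-1},$$
where I have used that the pointwise product is commutative so that $f.Id=Id.f$. It then remains only to simplify the summand under the stronger hypothesis.

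The key step is the elementary identity for completely additive $f$: from $f(p^{\alpha})=\alpha f(p)$ one gets $f(p^{\alpha})-f(p^{\alpha-1})=\alpha f(p)-(\alpha-1)f(p)=f(p)$ for every $\alpha\geq 1$. The boundary case $\alpha=1$ uses $f(p^{0})=f(1)=0$, which holds for any additive function because $f(1)=f(1\cdot 1)=2f(1)$. Substituting $f(p^{\alpha})-f(p^{\alpha-1})=f(p)$ into each term turns $\sum_{p^{\alpha}||n}\frac{f(p^{\alpha})-f(p^{\alpha-1})}{p-1}$ into $\sum_{p^{\alpha}||n}\frac{f(p)}{p-1}$, which is precisely the claimed expression.

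I expect no genuine obstacle: the entire content of the corollary is carried by the preceding Theorem, and the reduction is purely algebraic. The only points demanding a moment's care are (i) confirming that the completely additive hypothesis legitimately invokes the Theorem, i.e. completely additive implies additive, and (ii) checking that $f(p^{\alpha})-f(p^{\alpha-1})=f(p)$ survives at $\alpha=1$, where the convention $f(1)=0$ is needed. Both are immediate. Should one prefer a self-contained derivation not leaning on the Theorem, an alternative route is to combine Proposition \ref{prop-2-7}, which gives $(\mu*\Phi_{Id.f})(n)=nf(n)$, with Proposition \ref{prop-2-6} and then perform the same simplification; but the specialization above is the cleanest.
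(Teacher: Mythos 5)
Your proposal is correct and follows exactly the paper's own route: the paper derives this corollary in one line from the preceding theorem by noting that complete additivity gives $f(p^{\alpha})-f(p^{\alpha-1})=\alpha f(p)-(\alpha-1)f(p)=f(p)$. Your additional checks (that completely additive implies additive, and that $f(1)=0$ handles the case $\alpha=1$) are sensible touches the paper leaves implicit, but the argument is the same.
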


\end{document}